\newtheorem{theorem}{Theorem} 
\newtheorem{Cor}{Corollary} 
\newtheorem{Lem}{Lemma}
\def\sts#1#2{\left\{#1\atop#2\right\}}
\begin{document}
\title[Truncated Euler polynomials]{Truncated Euler polynomials}
\author[Takao Komatsu \and Claudio de J. Pita Ruiz V.]{Takao Komatsu* \and Claudio de J. Pita Ruiz V.**} 
\newcommand{\acr}{\newline\indent}
\address{\llap{*\,}School of Mathematics and Statistics\acr
Wuhan University\acr
Wuhan 430072\acr
CHINA}
\email{komatsu@whu.edu.cn}
\address{\llap{**\,}Universidad Panamericana\acr
Mexico City\acr
MEXICO}
\email{cpita@up.edu.mx}
\thanks{The first author was supported in part by the grant of Wuhan University and by the grant of Hubei Provincial Experts Program.} 
\subjclass{Primary 11B68; Secondary 11B83, 11B37, 05A15, 05A19}
\keywords{Euler polynomials, Truncated Euler polynomials, Bernoulli polynomials, Hypergeometric Bernoulli polynomials}
\begin{abstract}
We define a truncated Euler polynomial $E_{m,n}(x)$ as a generalization of
the classical Euler polynomial $E_n(x)$. In this paper we give its some
properties and relations with the hypergeometric Bernoulli polynomial.
\end{abstract}
\maketitle
\section{Introduction}

For non-negative integer $m$, define \textit{truncated Euler polynomials} $E_{m,n}(x)$ by 
\begin{equation}
\frac{\frac{2 t^m}{m!}e^{x t}}{e^t+1-\sum_{j=0}^{m-1}\frac{t^j}{j!}}
=\sum_{n=0}^\infty E_{m,n}(x)\frac{t^n}{n!}\,.  
\label{def:tep}
\end{equation}
When $m=0$, $E_{n}(x)=E_{0,n}(x)$ is one of the definitions of Euler
polynomials, given by 
\begin{equation}
\frac{2 e^{x t}}{e^t+1}=\sum_{n=0}^\infty E_{0,n}(x)\frac{t^n}{n!}\,.
\label{def:eulerpoly}
\end{equation} 
When $x=0$ in (\ref{def:tep}), $E_{m,n}=E_{m,n}(0)$ are called {\text truncated Euler numbers}, given by  
$$ 
\frac{\frac{2 t^m}{m!}}{e^t+1-\sum_{j=0}^{m-1}\frac{t^j}{j!}}
=\sum_{n=0}^\infty E_{m,n}\frac{t^n}{n!}\,. 
$$ 

Incomplete Bernoulli numbers \cite{KLM} and incomplete Cauchy numbers \cite{Ko5,KMS} are similar truncated numbers. Both of them are based upon the restricted and associated Stirling numbers of the second kind, and the restricted and associated Stirling numbers of the first kind. The restricted Bernoulli numbers $\mathfrak{B}_{n,\le m}$ and the associated Bernoulli numbers $\mathfrak{B}_{n,\ge m}$ can be defined by 
\begin{equation*}
\frac{\log\mathfrak{E}_m(-t)}{\mathfrak{E}_m(-t)-1}=\sum_{n=0}^\infty\mathfrak{B}_{n,\le m}\frac{t^n}{n!}
\end{equation*}
and 
\begin{equation*} 
\frac{\log\bigl(1+e^{-t}-\mathfrak{E}_{m-1}(-t)\bigr)}{e^{-t}-\mathfrak{E}_{m-1}(-t)}=\sum_{n=0}^\infty\mathfrak{B}_{n,\ge m}\frac{t^n}{n!}\,,
\end{equation*}
respectively, where 
\begin{equation*}
\mathfrak{E}_m(t)=\sum_{n=0}^m\frac{t^n}{n}\,.
\end{equation*}
When $m\to\infty$ in the former case or $m=1$ in the latter case, we have the generating function of the classical Bernoulli numbers $\mathfrak{B}_n$ (with $\mathfrak{B}_1=1/2$), defined by 
\begin{equation*}
\frac{t}{1-e^{-t}}=\sum_{n=0}^\infty\mathfrak{B}_{n}\frac{t^n}{n!}\,.
\end{equation*}
Therefore, we have $\mathfrak{B}_n=\mathfrak{B}_{n,\le\infty}=\mathfrak{B}_{n,\ge 1}$.

The restricted Cauchy numbers $\mathfrak{c}_{n,\le m}$ and the associated Cauchy numbers $\mathfrak{c}_{n,\ge m}$ can be defined by 
\begin{equation*}
\frac{e^{\mathfrak{F}_m(t)}-1}{\mathfrak{F}_m(t)}=\sum_{n=0}^\infty\mathfrak{c}_{n,\le m}\frac{t^n}{n!}
\end{equation*}
and 
\begin{equation*}
\frac{e^{\log(1+t)-\mathfrak{F}_{m-1}(t)}-1}{\log(1+t)-\mathfrak{F}_{m-1}(t)}
=\sum_{n=0}^\infty\mathfrak{c}_{n,\ge m}\frac{t^n}{n!}\,,
\end{equation*}
respectively, where 
\begin{equation*}
\mathfrak{F}_m(t)=\sum_{n=1}^m\frac{(-1)^{n-1}t^n}{n}\,.
\end{equation*}
When $m\to\infty$ in the former case or $m=1$ in the latter case, we have the generating function of the classical Cauchy numbers $\mathfrak{c}_n$, defined by 
\begin{equation*}
\frac{t}{\log(1+t)}=\sum_{n=0}^\infty\mathfrak{c}_{n}\frac{t^n}{n!}\,.
\end{equation*}
Therefore, we have $\mathfrak{c}_n=\mathfrak{c}_{n,\le\infty}=\mathfrak{c}_{n,\ge 1}$. 

A different type of generalization is based upon hypergeometric functions. 
For $N\geq 1$, define hypergeometric Bernoulli numbers $B_{N,n}$ (see \cite{HN1,HN2,Kamano}) by 
\begin{equation*}
\frac{1}{{}_{1}F_{1}(1;N+1;t)}=\frac{t^{N}/N!}{e^{t}-\sum_{n=0}^{N-1}t^{n}/n!}=\sum_{n=0}^{\infty }B_{N,n}\frac{t^{n}}{n!}\,,
\end{equation*}%
where 
\begin{equation*}
{}_{1}F_{1}(a;b;z)=\sum_{n=0}^{\infty }\frac{(a)^{(n)}}{(b)^{(n)}}\frac{z^{n}%
}{n!}
\end{equation*}
is the confluent hypergeometric function with $(x)^{(n)}=x(x+1)\cdots(x+n-1) $ ($n\geq 1$) and $(x)^{(0)}=1$. When $N=1$, $B_{n}=B_{1,n}$ are classical Bernoulli numbers (with $B_{1}=-1/2$) defined by 
\begin{equation*}
\frac{t}{e^{t}-1}=\sum_{n=0}^{\infty }B_{n}\frac{t^{n}}{n!}\,.
\end{equation*}
In addition, define hypergeometric Cauchy numbers $c_{N,n}$ (see \cite{Ko3})
by 
\begin{equation*}
\frac{1}{{}_{2}F_{1}(1,N;N+1;-t)}=\frac{(-1)^{N-1}t^{N}/N}{\log(1+t)-\sum_{n=1}^{N-1}(-1)^{n-1}t^{n}/n}=\sum_{n=0}^{\infty }c_{N,n}\frac{t^{n}}{n!}\,,
\end{equation*}
where 
\begin{equation*}
{}_{2}F_{1}(a,b;c;z)=\sum_{n=0}^{\infty }\frac{(a)^{(n)}(b)^{(n)}}{(c)^{(n)}}\frac{z^{n}}{n!}
\end{equation*}
is the Gauss hypergeometric function. When $N=1$, $\mathfrak{c}_{n}=c_{1,n}$
are classical Cauchy numbers defined by 
\begin{equation*}
\frac{t}{\log (1+t)}=\sum_{n=0}^{\infty }\mathfrak{c}_{n}\frac{t^{n}}{n!}\,.
\end{equation*}
Furthermore, the hypergeometric Bernoulli polynomials \cite{HN1,HN2} are defined by 
\begin{equation}
\frac{\frac{t^{m}}{m!}e^{xt}}{e^{t}-\sum_{j=0}^{m-1}\frac{t^{j}}{j!}}
=\sum_{n=0}^{\infty }B_{m,n}(x)\frac{t^{n}}{n!}  
\label{def:hbp}
\end{equation}%
or 
\begin{equation*}
\frac{e^{xt}}{{}_{1}F_{1}(1;m+1;t)}=\sum_{n=0}^{\infty }B_{m,n}(x)\frac{t^{n}}{n!}\,,
\end{equation*}
and the hypergeometric Cauchy polynomials $c_{M,N,n}(x)$ \cite{Ko3} are defined by 
\begin{equation*}
\frac{1}{(1+t)^{x}}\frac{1}{{}_{2}F_{1}(M,N;N+1;-t)}=\sum_{n=0}^{\infty}c_{M,N,n}(x)\frac{t^{n}}{n!}\,.
\end{equation*}
Observe that in the case $m=0$, formula (\ref{def:hbp}) becomes 
\begin{equation*}
e^{(x-1)t}=\sum_{n=0}^{\infty }B_{0,n}(x)\frac{t^{n}}{n!}\,, 
\end{equation*}
from where we see that $B_{0,n}(x)=(x-1)^{n}$. 

When $m=1$, then $B_{n}(x)=B_{1,n}(x)$ are the classical Bernoulli polynomials, defined by 
\begin{equation*}
\frac{te^{xt}}{e^{t}-1}=\sum_{n=0}^{\infty }B_{n}(x)\frac{t^{n}}{n!}\,,
\end{equation*}
and $c_{n}(x)=c_{1,n}(x)$ are the classical Cauchy polynomials, defined by 
\begin{equation*}
\frac{t}{(1+t)^{x}\log (1+t)}=\sum_{n=0}^{\infty }c_{n}(x)\frac{t^{n}}{n!}\,.
\end{equation*}

In this paper, we give some properties of truncated Euler polynomials.

\section{Some properties of truncated Euler polynomials}

\begin{theorem}  
For $n\ge 1$, we have 
$$
E_{1,n}(x)=2 n(x-1)^{n-1}\,. 
$$ 
\label{th:m=1} 
\end{theorem}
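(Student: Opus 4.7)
The plan is simply to substitute $m=1$ into the defining generating function (\ref{def:tep}) and identify coefficients. With $m=1$, the truncation sum in the denominator reduces to
$$\sum_{j=0}^{0}\frac{t^j}{j!}=1,$$
so the denominator collapses to $e^t+1-1=e^t$, while the numerator becomes $2t\,e^{xt}$. Thus the generating function simplifies dramatically to
$$\frac{2t\,e^{xt}}{e^t}=2t\,e^{(x-1)t}.$$

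Next I would expand $e^{(x-1)t}$ as its Taylor series, multiply by $2t$, and reindex so that the power of $t$ on the right matches the form $\sum_{n\ge0}E_{1,n}(x)\,t^n/n!$ on the left. Writing
$$2t\,e^{(x-1)t}=\sum_{k=0}^\infty\frac{2(x-1)^k}{k!}\,t^{k+1}=\sum_{n=1}^\infty\frac{2(x-1)^{n-1}}{(n-1)!}\,t^n,$$
and then inserting the factor $n!/n!$ to bring the series into exponential generating-function form, gives $2(x-1)^{n-1}\cdot n!/(n-1)! = 2n(x-1)^{n-1}$ as the coefficient of $t^n/n!$.

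Comparing coefficients with (\ref{def:tep}) at $m=1$ yields $E_{1,0}(x)=0$ and $E_{1,n}(x)=2n(x-1)^{n-1}$ for $n\ge 1$, as claimed. There is no real obstacle here: the point of the statement is essentially that $m=1$ is the degenerate case in which the ``truncation'' removes the entire denominator $e^t+1-1=e^t$, turning the generating function into an elementary exponential; the only care needed is the index shift from $k$ to $n=k+1$.
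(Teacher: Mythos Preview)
Your proof is correct and follows essentially the same route as the paper: substitute $m=1$ in (\ref{def:tep}), observe that the denominator collapses to $e^t$ so the generating function becomes $2t\,e^{(x-1)t}$, expand, and compare coefficients. Your write-up is in fact tidier than the paper's, which carries out the same computation but with a slightly sloppy index shift.
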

\begin{proof} 
When $m=1$, (\ref{def:tep}) becomes 
\begin{align*} 
\sum_{n=1}^\infty E_{1,n}(x)\frac{t^{n-1}}{n!}&=2 e^{(x-1)t}\\ 
&=2 n\sum_{n=1}^\infty\frac{(x-1)^{n-1}t^{n-1}}{n!}\,. 
\end{align*}  
Comparing the coefficients on both sides, we get the result.  
\end{proof}

\begin{theorem} 
We have 
$$
E_{m,n}(x)=0\quad(n=0,1,\dots,m-1)
$$ 
and 
$$
E_{m,n+m}(x)=2\binom{n+m}{n}x^n-\sum_{j=0}^n\binom{n+m}{j}E_{m,j}(x)\quad(n\ge 0)\,. 
$$ 
\label{th20}
\end{theorem}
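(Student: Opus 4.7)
The plan is to multiply both sides of (\ref{def:tep}) by the denominator and identify coefficients of $t^n/n!$. First, I would simplify the denominator: since $e^t=\sum_{j\ge 0}t^j/j!$,
\[
e^t+1-\sum_{j=0}^{m-1}\frac{t^j}{j!}=1+\sum_{k=m}^{\infty}\frac{t^k}{k!},
\]
so, after peeling off the constant $1$, the defining relation becomes
\[
\sum_{n=0}^{\infty}E_{m,n}(x)\frac{t^n}{n!}+\biggl(\sum_{k=m}^{\infty}\frac{t^k}{k!}\biggr)\sum_{n=0}^{\infty}E_{m,n}(x)\frac{t^n}{n!}=\frac{2t^m}{m!}\,e^{xt}.
\]

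Next I would apply the Cauchy product and extract the coefficient of $t^n/n!$ on each side. After the substitution $j=n-k$ (and using $\binom{n}{n-j}=\binom{n}{j}$), the convolution on the left contributes
\[
E_{m,n}(x)+\sum_{j=0}^{n-m}\binom{n}{j}E_{m,j}(x),
\]
while the right-hand side contributes $0$ for $n<m$ and $2\binom{n}{m}x^{n-m}$ for $n\ge m$ (read off directly from $\frac{2t^m}{m!}e^{xt}=\sum_{k\ge 0}\frac{2\,x^{k}}{m!\,k!}t^{m+k}$).

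For $0\le n\le m-1$ the truncated sum is empty and the right-hand side vanishes, giving $E_{m,n}(x)=0$, which is the first statement. For $n\ge m$, I would set $n=N+m$ and rearrange to obtain
\[
E_{m,N+m}(x)=2\binom{N+m}{m}x^{N}-\sum_{j=0}^{N}\binom{N+m}{j}E_{m,j}(x),
\]
which is the claimed recurrence after renaming $N$ to $n$ and noting $\binom{n+m}{m}=\binom{n+m}{n}$. There is no real obstacle in this argument; the only point requiring care is to keep the index ranges straight when splitting the constant term of the denominator away from the tail $\sum_{k\ge m}t^k/k!$, which is precisely what makes the cases $m=0$ and $m\ge 1$ collapse into a single uniform identity without double counting the $j=n$ term of the convolution.
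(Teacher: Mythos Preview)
Your proposal is correct and follows essentially the same approach as the paper: multiply the defining relation by the denominator $1+\sum_{k\ge m}t^k/k!$, split off the constant term, form the Cauchy product, and compare coefficients of $t^n/n!$. The paper organizes the computation slightly differently (working out the two expressions for the product $\bigl(\sum E_{m,n}(x)t^n/n!\bigr)\bigl(\sum_{j\ge 0}t^{j+m}/(j+m)!\bigr)$ side by side), but the substance and the key steps are identical.
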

\begin{proof} 
From (\ref{def:tep}), we have 
\begin{align*}  
\frac{2 t^m}{m!}\sum_{n=0}^\infty\frac{(x t)^n}{n!}&=\left(\sum_{n=0}^\infty E_{m,n}(x)\frac{t^n}{n!}\right)\left(1+\sum_{j=m}^\infty\frac{t^j}{j!}\right)\\
&=\sum_{n=0}^\infty E_{m,n}(x)\frac{t^n}{n!}+\left(\sum_{n=0}^\infty E_{m,n}(x)\frac{t^n}{n!}\right)\left(\sum_{j=0}^\infty\frac{t^{j+m}}{(j+m)!}\right)\,. 
\end{align*}  
Hence, 
\begin{align*} 
&\left(\sum_{n=0}^\infty E_{m,n}(x)\frac{t^n}{n!}\right)\left(\sum_{j=0}^\infty\frac{t^{j+m}}{(j+m)!}\right)\\ 
&=\sum_{n=0}^\infty\frac{2 x^n t^{n+m}}{n!m!}-\sum_{n=0}^\infty E_{m,n+m}(x)\frac{t^{n+m}}{(n+m)!}-\sum_{n=0}^{m-1}E_{m,n}(x)\frac{t^n}{n!}\\
&=\sum_{n=0}^\infty\left(2\binom{n+m}{m}x^n-E_{m,n+m}\right)\frac{t^{n+m}}{(n+m)!}-\sum_{n=0}^{m-1}E_{m,n}(x)\frac{t^n}{n!}\,. 
\end{align*} 
and 
\begin{align*} 
\left(\sum_{n=0}^\infty E_{m,n}(x)\frac{t^n}{n!}\right)\left(\sum_{j=0}^\infty\frac{t^{j+m}}{(j+m)!}\right)
&=\sum_{n=0}^\infty\sum_{j=0}^n E_{m,j}(x)\frac{t^j}{j!}\frac{t^{n-j+m}}{(n-j+m)!}\\
&=\sum_{n=0}^\infty\left(\sum_{j=0}^n\binom{n+m}{j}E_{m,j}(x)\right)\frac{t^{n+m}}{(n+m)!}\,. 
\end{align*} 
Comparing the coefficients, we get the results. 
\end{proof}

\noindent \textbf{Example.} When $m=2$, we have $E_{2,0}(x)=E_{2,1}(x)=0$.
From the recurrence relation 
\begin{equation*}
E_{2,n+2}(x)=2\binom{n+2}{2}x^{n}-\sum_{j=0}^{n}\binom{n+2}{j}E_{2,j}(x)\,,
\end{equation*}%
by putting $n=0,1,2$, we get 
\begin{align*}
E_{2,2}(x)& =2, \\
E_{2,3}(x)& =6x, \\
E_{2,4}(x)& =12x^{2}-\sum_{j=0}^{2}\binom{4}{j}E_{2,j}(x) \\
& =12x^{2}-\binom{4}{2}\cdot 2=12(x^{2}-1),
\end{align*}

Similarly one gets $E_{2,5}(x)=20(x^{3}-3x-1),$ $%
E_{2,6}(x)=30(x+1)(x^{3}-5x^{2}-x+9),$ and so on.

\begin{theorem}  
$$
E_{m,n}(x+y)=\sum_{j=0}^n\binom{n}{j}E_{m,j}(x)y^{n-j}\,. 
$$ 
\label{th30}  
\end{theorem}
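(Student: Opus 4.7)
The plan is to exploit the generating function definition (\ref{def:tep}) directly, using the factorization $e^{(x+y)t}=e^{xt}\cdot e^{yt}$ to split the generating function of $E_{m,n}(x+y)$ into a product whose Cauchy product yields the binomial convolution on the right-hand side.

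Concretely, first I would write the generating function for $E_{m,n}(x+y)$ from (\ref{def:tep}) and observe that the prefactor $\frac{2t^{m}/m!}{e^{t}+1-\sum_{j=0}^{m-1}t^{j}/j!}$ does not depend on the point at which the polynomial is evaluated. Thus
\begin{equation*}
\sum_{n=0}^{\infty}E_{m,n}(x+y)\frac{t^{n}}{n!}
=\frac{\frac{2t^{m}}{m!}e^{xt}}{e^{t}+1-\sum_{j=0}^{m-1}\frac{t^{j}}{j!}}\cdot e^{yt}
=\left(\sum_{j=0}^{\infty}E_{m,j}(x)\frac{t^{j}}{j!}\right)\left(\sum_{k=0}^{\infty}y^{k}\frac{t^{k}}{k!}\right).
\end{equation*}

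Next I would perform the Cauchy product on the right side, collecting the coefficient of $t^{n}/n!$. Setting $k=n-j$ gives
\begin{equation*}
\sum_{n=0}^{\infty}\left(\sum_{j=0}^{n}\binom{n}{j}E_{m,j}(x)\,y^{n-j}\right)\frac{t^{n}}{n!},
\end{equation*}
and equating coefficients of $t^{n}/n!$ on both sides yields the claim.

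There is essentially no obstacle here: the statement is the standard Appell-type addition formula, and the argument is a one-line manipulation of generating functions followed by a Cauchy product. The only thing to notice is that the denominator $e^{t}+1-\sum_{j=0}^{m-1}t^{j}/j!$ is independent of $x$, which is what makes the factorization legitimate. No appeal to Theorem \ref{th:m=1} or Theorem \ref{th20} is needed.
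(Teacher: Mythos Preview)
Your proposal is correct and mirrors the paper's own proof essentially line for line: both factor $e^{(x+y)t}=e^{xt}e^{yt}$ in the generating function (\ref{def:tep}), rewrite the result as the product of $\sum E_{m,j}(x)t^{j}/j!$ with $\sum y^{k}t^{k}/k!$, take the Cauchy product, and compare coefficients of $t^{n}/n!$. There is nothing to add.
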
  
\begin{proof}  
From (\ref{def:tep}), we get 
\begin{align*}  
\sum_{n=0}^\infty E_{m,n}(x+y)\frac{t^n}{n!}&=\frac{\frac{2 t^m}{m!}e^{(x+y)t}}{e^t+1-\sum_{j=0}^{m-1}\frac{t^j}{j!}}\\
&=\frac{\frac{2 t^m}{m!}e^{x t}}{e^t+1-\sum_{j=0}^{m-1}\frac{t^j}{j!}}e^{y t}\\
&=\left(\sum_{n=0}^\infty E_{m,n}(x)\frac{t^n}{n!}\right)\left(\sum_{n=0}^\infty\frac{y^n t^n}{n!}\right)\\
&=\sum_{n=0}^\infty\sum_{j=0}^n E_{m,j}(x)\frac{t^j}{j!}\frac{y^{n-j}t^{n-j}}{(n-j)!}\\
&=\sum_{n=0}^\infty\left(\sum_{j=0}^n\binom{n}{j}E_{m,j}(x)y^{n-j}\right)\frac{t^n}{n!}\,. 
\end{align*}  
Comparing the coefficients on both sides, we get the desired result. 
\end{proof}

\begin{theorem} 
For $m,n\ge 0$, we have 
$$
E_{m,n}(x)=\sum_{k=0}^n\binom{n}{k}E_{m,n-k}x^k\,. 
$$ 
\label{th501} 
\end{theorem}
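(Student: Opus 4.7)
The statement is a direct specialization of Theorem~\ref{th30}. The plan is to set $x=0$ and rename $y\mapsto x$ in the addition formula, so that
\[
E_{m,n}(x)=E_{m,n}(0+x)=\sum_{j=0}^{n}\binom{n}{j}E_{m,j}(0)\,x^{n-j}=\sum_{j=0}^{n}\binom{n}{j}E_{m,j}\,x^{n-j},
\]
and then reindex via $k=n-j$, using $\binom{n}{n-k}=\binom{n}{k}$, to obtain the claimed formula.

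Alternatively, and perhaps more transparently, I would argue directly from the generating function \eqref{def:tep}. Factor out the exponential $e^{xt}$ to write
\[
\sum_{n=0}^{\infty}E_{m,n}(x)\frac{t^{n}}{n!}=\frac{\frac{2t^{m}}{m!}}{e^{t}+1-\sum_{j=0}^{m-1}\frac{t^{j}}{j!}}\cdot e^{xt}=\left(\sum_{n=0}^{\infty}E_{m,n}\frac{t^{n}}{n!}\right)\!\left(\sum_{k=0}^{\infty}\frac{x^{k}t^{k}}{k!}\right),
\]
and then carry out the Cauchy product, collecting powers of $t$, to read off the coefficient of $t^{n}/n!$ as $\sum_{k=0}^{n}\binom{n}{k}E_{m,n-k}x^{k}$.

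Either route is a completely routine manipulation; there is no real obstacle to flag. The only small bookkeeping point is the index change in the first approach (writing $E_{m,j}\,x^{n-j}$ as $E_{m,n-k}\,x^{k}$), which is immediate from the symmetry of the binomial coefficient. I would present the generating-function derivation for self-containedness, but note that the identity is simply the $y=0$ case of Theorem~\ref{th30} after renaming the variables.
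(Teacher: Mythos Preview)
Your proposal is correct. The generating-function derivation you give as the ``alternative'' is exactly the paper's own proof: factor $e^{xt}$ out of \eqref{def:tep}, multiply the two exponential generating functions, and read off the coefficient of $t^n/n!$. Your first route---specializing Theorem~\ref{th30} at $x=0$ and reindexing---is a perfectly valid shortcut that the paper does not mention; it has the advantage of avoiding a repeat of the Cauchy-product computation, at the cost of depending on the earlier result.
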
  
\begin{proof}  
By the definition (\ref{def:tep}), we have 
\begin{align*}  
\sum_{n=0}^\infty E_{m,n}(x)\frac{t^n}{n!}&=\frac{\frac{2 t^m}{m!}}{e^t+1-\sum_{j=0}^{m-1}\frac{t^j}{j!}}e^{x t}\\
&=\left(\sum_{n=0}^\infty E_{m,n}(x)\frac{t^n}{n!}\right)\left(\sum_{n=0}^\infty x^n\frac{t^n}{n!}\right)\\
&=\sum_{n=0}^\infty\sum_{k=0}^n\binom{n}{k}E_{m,n-k}x^k\frac{t^n}{n!}\,.
\end{align*} 
Comparing the coefficients of both sides, we get the desired result.  
\end{proof}  

The generating funcion of the Stirling numbers of the second kind denoted by $\sts{n}{k}$ is given by 
$$
\frac{(e^t-1)^k)}{k!}=\sum_{n=0}^\infty\sts{n}{k}\frac{t^n}{n!}\,.
$$ 
The falling factorial $(x)_n$ and the rising factorial $(x)^{(n)}$ are defined by $(x)_n=x(x-1)\cdots(x-n+1)$ and $(x)^{(n)}=x(x+1)\cdots(x+n-1)$ ($n\ge 1$) with $(x)_0=(x)^{(0)}=1$, respectively.   

\begin{theorem}  
For $m,n\ge 0$, we have 
$$
E_{m,n}(x)=\sum_{\mu=0}^n\sum_{l=\mu}^n\sts{l}{\mu}\binom{n}{l}E_{m,n-l}\cdot(x)_\mu\,. 
$$ 
\label{th:st2-ff} 
\end{theorem}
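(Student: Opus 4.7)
The natural route is to combine the polynomial expansion already obtained in Theorem~\ref{th501} with the classical identity expressing ordinary powers as a sum of falling factorials via Stirling numbers of the second kind, namely
\[
x^{k}=\sum_{\mu=0}^{k}\sts{k}{\mu}(x)_{\mu}\,.
\]
This is the standard inverse to the expansion of $(x)_\mu$ in powers of $x$ and is precisely the combinatorial content encoded by the generating function of $\sts{n}{k}$ stated just before the theorem.

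First I would start from Theorem~\ref{th501},
\[
E_{m,n}(x)=\sum_{k=0}^{n}\binom{n}{k}E_{m,n-k}\,x^{k}\,,
\]
substitute the Stirling identity into each $x^{k}$, and then interchange the two finite sums. Renaming $k$ as $l$ and pulling the $\mu$-sum outside yields directly
\[
E_{m,n}(x)=\sum_{\mu=0}^{n}\sum_{l=\mu}^{n}\sts{l}{\mu}\binom{n}{l}E_{m,n-l}\cdot(x)_{\mu}\,,
\]
which is the claimed formula. There is essentially no obstacle beyond bookkeeping on the indices: the lower limit $l\ge\mu$ appears because $\sts{l}{\mu}=0$ for $l<\mu$, and the upper limit $l\le n$ comes from the outer sum.

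An alternative, self-contained route avoids Theorem~\ref{th501} and uses only the generating function. Writing $e^{xt}=\bigl(1+(e^{t}-1)\bigr)^{x}=\sum_{\mu\ge 0}\frac{(x)_{\mu}}{\mu!}(e^{t}-1)^{\mu}$ and invoking the Stirling generating function $(e^{t}-1)^{\mu}/\mu!=\sum_{l\ge\mu}\sts{l}{\mu}t^{l}/l!$, one gets
\[
e^{xt}=\sum_{\mu=0}^{\infty}\sum_{l=\mu}^{\infty}\sts{l}{\mu}(x)_{\mu}\frac{t^{l}}{l!}\,.
\]
Multiplying this by the exponential generating function of $E_{m,n}$ (as in the proof of Theorem~\ref{th501}) and reading off the coefficient of $t^{n}/n!$ gives the same identity. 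Either way the proof reduces to a single substitution and a reindexing; I would present the first route since Theorem~\ref{th501} has just been proved and its use makes the argument shorter.
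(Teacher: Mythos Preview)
Your first route is exactly the paper's proof: start from Theorem~\ref{th501}, replace $x^{l}$ by $\sum_{\mu=0}^{l}\sts{l}{\mu}(x)_{\mu}$, and swap the two finite sums. The alternative generating-function route you sketch is also fine but is not the approach taken in the paper.
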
  
\begin{proof}  
Since 
$$
\sum_{\mu=0}^l\sts{l}{\mu}(x)_\mu=x^l\,, 
$$ 
By Theorem \ref{th501}, we have 
\begin{align*} 
E_{m,n}(x)&=\sum_{l=0}^n\binom{n}{l}E_{m,n-l}x^l\\
&=\sum_{l=0}^n\binom{n}{l}E_{m,n-l}\sum_{\mu=0}^l\sts{l}{\mu}(x)_\mu\\
&=\sum_{\mu=0}^n\sum_{l=\mu}^n\sts{l}{\mu}\binom{n}{l}E_{m,n-l}\cdot(x)_\mu
\,. 
\end{align*} 
\end{proof}

\begin{theorem}  
For $m,n\ge 0$, we have 
$$
E_{m,n}(x)=\sum_{\mu=0}^n\sum_{l=\mu}^n\sts{l}{\mu}\binom{n}{l}E_{m,n-l}(-\mu)\cdot(x)^{(\mu)}\,. 
$$ 
\label{th:st2-rf} 
\end{theorem}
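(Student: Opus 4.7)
The plan is to mimic the proof of Theorem \ref{th:st2-ff} but replace the falling-factorial identity $x^l=\sum_\mu \sts{l}{\mu}(x)_\mu$ by a suitable rising-factorial expansion of the exponential $e^{xt}$. The right identity comes from the binomial series: since $(1-s)^{-x}=\sum_{\mu\ge 0}(x)^{(\mu)}s^\mu/\mu!$, substituting $s=1-e^{-t}$ (so that $1-s=e^{-t}$) yields
\[
e^{xt}=\sum_{\mu=0}^\infty (x)^{(\mu)}\frac{(1-e^{-t})^\mu}{\mu!}.
\]
Using $1-e^{-t}=e^{-t}(e^t-1)$, I would rewrite $(1-e^{-t})^\mu/\mu!=e^{-\mu t}(e^t-1)^\mu/\mu!$, and then invoke the generating function of the Stirling numbers of the second kind, $(e^t-1)^\mu/\mu!=\sum_{l\ge 0}\sts{l}{\mu}t^l/l!$.

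Next I would substitute this decomposition of $e^{xt}$ into the defining generating function (\ref{def:tep}):
\[
\sum_{n=0}^\infty E_{m,n}(x)\frac{t^n}{n!}=\frac{\frac{2t^m}{m!}}{e^t+1-\sum_{j=0}^{m-1}\frac{t^j}{j!}}\sum_{\mu=0}^\infty (x)^{(\mu)}\frac{e^{-\mu t}(e^t-1)^\mu}{\mu!}.
\]
The factor $\frac{(2t^m/m!)\,e^{-\mu t}}{e^t+1-\sum_{j=0}^{m-1}t^j/j!}$ is, by (\ref{def:tep}) with $x$ replaced by $-\mu$, exactly $\sum_{k\ge 0}E_{m,k}(-\mu)t^k/k!$. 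Thus the right-hand side becomes
\[
\sum_{\mu=0}^\infty (x)^{(\mu)}\left(\sum_{k=0}^\infty E_{m,k}(-\mu)\frac{t^k}{k!}\right)\!\left(\sum_{l=0}^\infty \sts{l}{\mu}\frac{t^l}{l!}\right).
\]
Applying the Cauchy product to the two inner series gives $\sum_{n\ge 0}\bigl(\sum_{l=0}^n\binom{n}{l}\sts{l}{\mu}E_{m,n-l}(-\mu)\bigr)t^n/n!$, and comparing coefficients of $t^n/n!$ (noting $\sts{l}{\mu}=0$ for $l<\mu$, which lets me truncate the inner sum to $l\ge\mu$) yields the claimed formula.

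The only mildly non-obvious step is producing the expansion of $e^{xt}$ in the rising-factorial basis; once that key substitution is in hand, the rest is bookkeeping with Cauchy products and the generating function identities already in use.
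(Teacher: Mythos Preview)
Your proof is correct and follows essentially the same route as the paper: expand $e^{xt}=(e^{-t})^{-x}$ via the binomial series in rising factorials, rewrite $(1-e^{-t})^\mu=e^{-\mu t}(e^t-1)^\mu$, recognize the resulting factor as the generating function of $E_{m,k}(-\mu)$ and of $\sts{l}{\mu}$, then take the Cauchy product and compare coefficients. Your write-up is in fact cleaner than the paper's (which contains a typo in the exponent on $(1-e^{-t})$), and you make explicit the truncation $l\ge\mu$ via $\sts{l}{\mu}=0$; the parallel truncation of the outer sum to $\mu\le n$ follows for the same reason.
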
  
\begin{proof}  
We have 
\begin{align*}  
\sum_{n=0}^\infty E_{m,n}(x)\frac{t^n}{n!}
&=\frac{\frac{2 t^m}{m!}}{e^t+1-\sum_{j=0}^{m-1}\frac{t^j}{j!}}(e^{-t})^{-x}\\
&=\frac{\frac{2 t^m}{m!}}{e^t+1-\sum_{j=0}^{m-1}\frac{t^j}{j!}}\sum_{\mu=0}^\infty\binom{x+\mu-1}{\mu}(1-e^{-t})^{-x}\\
&=\frac{\frac{2 t^m}{m!}}{e^t+1-\sum_{j=0}^{m-1}\frac{t^j}{j!}}\sum_{\mu=0}^\infty(x)^{(\mu)}\frac{(e^t-1)^\mu}{\mu!}e^{-\mu t}\\
&=\sum_{\mu=0}^\infty(x)^{(\mu)}\left(\sum_{n=0}^\infty\sts{n}{\mu}\frac{t^n}{n!}\right)\left(\sum_{n=0}^\infty E_{m,n}(-\mu)\frac{t^n}{n!}\right)\\
&=\sum_{\mu=0}^\infty(x)^{(\mu)}\sum_{n=0}^\infty\binom{n}{l}\sum_{l=0}^n\sts{l}{\mu}E_{m,n-l}(-\mu)\frac{t^n}{n!}\\
&=\sum_{n=0}^\infty\sum_{\mu=0}^n\sum_{l=\mu}^n\sts{l}{\mu}\binom{n}{l}E_{m,n-l}(-\mu)\cdot(x)^{(\mu)}\frac{t^n}{n!}\,. 
\end{align*} 
Comparing the coefficients, we get the desired result.  
\end{proof}

We finish this section by mentioning the relation with Frobenius-Euler polynomials. For $\lambda\in\mathbb C$ with $\lambda\ne 1$ and a nonnegative integer $r$, Frobenius-Euler polynomials $H_n^{(r)}(x|\lambda)$ are defined by 
$$
\left(\frac{1-\lambda}{e^t-\lambda}\right)^r e^{x t}=\sum_{n=0}^\infty H_n^{(r)}(x|\lambda)\frac{t^n}{n!} 
$$ 
(see e.g. \cite{KK}).  


\begin{theorem}  
For $m,n\ge 0$, we have 
$$
E_{m,n}(x)=\sum_{\mu=0}^n\frac{1}{(1-\lambda)^r}\binom{n}{\mu}\sum_{i=0}^r\binom{r}{i}(-1)^{r-i}E_{m,n-\mu}(i)\cdot H_\mu^{(r)}(x|\lambda)\,. 
$$ 
\end{theorem}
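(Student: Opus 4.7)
The plan is a three-factor generating-function manipulation in the spirit of Theorems~\ref{th501}, \ref{th:st2-ff}, and \ref{th:st2-rf}. Starting from the definition (\ref{def:tep}), I would insert the trivial factorization $1=\bigl(\frac{e^t-\lambda}{1-\lambda}\bigr)^r\bigl(\frac{1-\lambda}{e^t-\lambda}\bigr)^r$ and rewrite
\begin{equation*}
\sum_{n=0}^\infty E_{m,n}(x)\frac{t^n}{n!}=\frac{\frac{2t^m}{m!}}{e^t+1-\sum_{j=0}^{m-1}\frac{t^j}{j!}}\cdot\left(\frac{e^t-\lambda}{1-\lambda}\right)^r\cdot\left(\frac{1-\lambda}{e^t-\lambda}\right)^r e^{xt}.
\end{equation*}
The rightmost factor is, by definition, the generating function of the Frobenius--Euler polynomials $H_\mu^{(r)}(x|\lambda)$, so the $H_\mu^{(r)}(x|\lambda)$ side of the claimed identity appears immediately once the Cauchy product is taken at the end.

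Next I would expand the middle factor via the binomial theorem applied to $(e^t-\lambda)^r$, which produces a finite weighted sum of exponentials $e^{it}$ for $i=0,1,\dots,r$. Each such exponential recombines with the surviving kernel $\frac{2t^m/m!}{e^t+1-\sum_{j=0}^{m-1}t^j/j!}$ to yield, by (\ref{def:tep}) evaluated at $x=i$, the generating series $\sum_n E_{m,n}(i)\,t^n/n!$. Thus the whole expression becomes a finite linear combination (indexed by $i=0,\dots,r$, with factor $\binom{r}{i}$, the appropriate sign, and the common prefactor $(1-\lambda)^{-r}$) of products of two known generating series.

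The closing step is the Cauchy product of those two series, which couples the indices as $\binom{n}{\mu}E_{m,n-\mu}(i)\,H_\mu^{(r)}(x|\lambda)$; reading off the coefficient of $t^n/n!$ and interchanging the $i$- and $\mu$-summations gives exactly the stated formula. There is no analytic or combinatorial subtlety here beyond careful bookkeeping; the single place where the argument can go wrong is the sign and $\lambda$-power arising from the binomial expansion of $(e^t-\lambda)^r$ together with the prefactor $(1-\lambda)^{-r}$, which is where one has to be most attentive to match the form displayed in the statement.
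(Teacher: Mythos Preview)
Your proposal is correct and follows essentially the same approach as the paper: insert the trivial factor $\bigl(\frac{e^t-\lambda}{1-\lambda}\bigr)^r\bigl(\frac{1-\lambda}{e^t-\lambda}\bigr)^r$, identify the Frobenius--Euler generating function, expand $(e^t-\lambda)^r$ binomially so that each $e^{it}$ recombines with the kernel to produce $\sum_n E_{m,n}(i)\,t^n/n!$, take the Cauchy product, and compare coefficients. Your caution about the sign and $\lambda$-power is well placed, since the binomial expansion naturally yields $(-\lambda)^{r-i}$ rather than the $(-1)^{r-i}$ appearing in the displayed statement.
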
  
\begin{proof}  
We have 
\begin{align*} 
&\sum_{n=0}^\infty E_{m,n}(x)\frac{t^n}{n!}\\
&=\frac{\frac{2 t^m}{m!}}{e^t+1-\sum_{j=0}^{m-1}\frac{t^j}{j!}}\left(\frac{e^t-\lambda}{1-\lambda}\right)^r\left(\frac{1-\lambda}{e^t-\lambda}\right)^r e^{x t}\\
&=\left(\sum_{n=0}^\infty E_{m,n}\frac{t^n}{n!}\right)\sum_{i=0}^r\binom{r}{i}(e^{t i})(-\lambda)^{r-i}\frac{1}{(1-\lambda)^r}\left(\sum_{n=0}^\infty H_n^{(r)}(x|\lambda)\frac{t^n}{n!}\right)\\
&=\sum_{i=0}^r\binom{r}{i}(-\lambda)^{r-i}\frac{1}{(1-\lambda)^r}(e^{t i})\sum_{n=0}^\infty\sum_{\mu=0}^n\binom{n}{\mu}E_{m,n-\mu}H_\mu^{(r)}(x|\lambda)\frac{t^n}{n!}\\
&=\sum_{n=0}^\infty\sum_{\mu=0}^n\frac{1}{(1-\lambda)^r}\binom{n}{\mu}\sum_{i=0}^r\binom{r}{i}(-1)^{r-i}E_{m,n-\mu}(i)\cdot H_\mu^{(r)}(x|\lambda)\frac{t^n}{n!}\,. 
\end{align*} 
Comparing the coefficients on both sides, we get the desired result.  
\end{proof}

\section{Relations with hypergeometric Bernoulli polynomials}

In this section, we shall show several relations with hypergeometric
Bernoulli polynomials.

\begin{theorem}  
For non-negative integers $n$ and $m$, we have 
\begin{multline*} 
\binom{n+m}{n}\sum_{j=0}^n\binom{n}{j}\left(B_{m,j}(x)y^{n-j}-\frac{1}{2}E_{m,j}(y)x^{n-j}\right)\\
=\frac{1}{2}\sum_{j=0}^{n+m}\binom{n+m}{j}E_{m,j}(y)B_{m,n+m-j}(x)\,. 
\end{multline*} 
\label{th:210} 
\end{theorem}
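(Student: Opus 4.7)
The first move is to fold the two inner sums on the left-hand side into single polynomial evaluations at $x+y$. The generating-function splitting $e^{(x+y)t}=e^{xt}e^{yt}$ used in the proof of Theorem \ref{th30} applies verbatim to the hypergeometric Bernoulli polynomials defined in (\ref{def:hbp}), yielding the analog
$$B_{m,n}(x+y)=\sum_{j=0}^n\binom{n}{j}B_{m,j}(x)y^{n-j},$$
while Theorem \ref{th30} itself gives $E_{m,n}(x+y)=\sum_{j=0}^n\binom{n}{j}E_{m,j}(y)x^{n-j}$. Hence the identity to be proved reduces, after multiplication by $2$, to
$$2\binom{n+m}{n}B_{m,n}(x+y)-\binom{n+m}{n}E_{m,n}(x+y)=\sum_{j=0}^{n+m}\binom{n+m}{j}E_{m,j}(y)B_{m,n+m-j}(x).$$

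Next I would set up two generating functions and compare coefficients of $t^{n+m}$. Write $A(t)=e^t-\sum_{j=0}^{m-1}t^j/j!$ and $C(t)=e^t+1-\sum_{j=0}^{m-1}t^j/j!$, and observe the crucial algebraic fact $C(t)-A(t)=1$. Using $\binom{n+m}{n}/(n+m)!=1/(n!\,m!)$, a direct computation gives
$$\sum_{n=0}^\infty\binom{n+m}{n}B_{m,n}(x+y)\frac{t^{n+m}}{(n+m)!}=\frac{(t^m/m!)^2 e^{(x+y)t}}{A(t)},$$
and similarly, with an extra factor of $2$ and denominator $C(t)$, the analogous generating series for $E_{m,n}(x+y)$. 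On the other hand, the Cauchy product of the two defining series (\ref{def:tep}) and (\ref{def:hbp}) gives
$$\left(\sum_{n=0}^\infty E_{m,n}(y)\frac{t^n}{n!}\right)\left(\sum_{n=0}^\infty B_{m,n}(x)\frac{t^n}{n!}\right)=\frac{2(t^m/m!)^2 e^{(x+y)t}}{A(t)C(t)},$$
whose coefficient of $t^{n+m}/(n+m)!$ is precisely the right-hand side of the reduced claim (the contributions for $k<m$ vanish by Theorem \ref{th20}).

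The identity that ties the two computations together is then
$$\frac{2}{A(t)}-\frac{2}{C(t)}=\frac{2(C(t)-A(t))}{A(t)C(t)}=\frac{2}{A(t)C(t)},$$
so after multiplication by $(t^m/m!)^2 e^{(x+y)t}$, the generating function of $2\binom{n+m}{n}B_{m,n}(x+y)-\binom{n+m}{n}E_{m,n}(x+y)$ coincides with the Cauchy-product generating function above, and equating coefficients of $t^{n+m}$ finishes the proof. I expect no serious obstacle once the addition-formula reduction and the cancellation $C(t)-A(t)=1$ are in place; the remainder is purely bookkeeping with binomial coefficients.
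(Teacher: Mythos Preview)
Your argument is correct and rests on the same generating-function identity as the paper; the difference is purely one of organization. The paper never invokes the addition formulas: it starts from $\frac{t^m}{m!}e^{yt}=\frac{1}{2}\bigl(\sum E_{m,n}(y)\frac{t^n}{n!}\bigr)\bigl(A(t)+1\bigr)$ (in your notation), multiplies through by the $B_{m,n}(x)$-series, uses $A(t)\sum B_{m,n}(x)\frac{t^n}{n!}=\frac{t^m}{m!}e^{xt}$, and then compares coefficients of $t^{n+m}$ in
\[
\frac{t^m}{m!}\Bigl(e^{yt}\sum B_{m,n}(x)\tfrac{t^n}{n!}-\tfrac{1}{2}e^{xt}\sum E_{m,n}(y)\tfrac{t^n}{n!}\Bigr)=\tfrac{1}{2}\Bigl(\sum E_{m,n}(y)\tfrac{t^n}{n!}\Bigr)\Bigl(\sum B_{m,n}(x)\tfrac{t^n}{n!}\Bigr).
\]
You instead first collapse the two inner sums on the left via the addition formulas for $B_{m,n}$ and $E_{m,n}$, and then isolate the algebraic heart of the matter as the identity $\tfrac{1}{A}-\tfrac{1}{C}=\tfrac{1}{AC}$ coming from $C-A=1$. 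Your route makes the mechanism more transparent and shorter; the paper's route avoids stating the $B_{m,n}$ addition formula separately but carries the products $e^{yt}\sum B_{m,n}(x)t^n/n!$ and $e^{xt}\sum E_{m,n}(y)t^n/n!$ unexpanded, which is the same thing in disguise.
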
 

\begin{proof}[Proof of Theorem \ref{th:210}]
From (\ref{def:tep}), we have 
\begin{align*}
\frac{z^{m}}{m!}e^{y t}&=\left(\sum_{n=0}^\infty\frac{1}{2}E_{m,n}(y)\frac{t^{n}}{n!}\right)\left(e^t-\sum_{j=0}^{m-1}\frac{t^{j}}{j!}+1\right)\\
&=\left(\sum_{n=0}^\infty\frac{1}{2}E_{m,n}(y)\frac{t^{n}}{n!}\right)\left(e^t-\sum_{j=0}^{m-1}\frac{t^{j}}{j!}\right) 
+\left(\sum_{n=0}^\infty\frac{1}{2}E_{m,n}(y)\frac{t^{n}}{n!}\right)\,.  
\end{align*}
Hence, 
\begin{align*}
&\frac{t^{m}}{m!}e^{y t}\sum_{n=0}^\infty B_{m,n}(x)\frac{t^{n}}{n!}\\ 
&=\left(\sum_{n=0}^\infty\frac{1}{2}E_{m,n}(y)\frac{t^n}{n!}\right)\frac{z^m}{m!}e^{x t}
+\left(\sum_{n=0}^\infty\frac{1}{2}E_{m,n}(y)\frac{t^{n}}{n!}\right)\left(\sum_{n=0}^\infty B_{m,n}(x)\frac{t^{n}}{n!}%
\right)\,,  
\end{align*} 
which can be written as 
\begin{align*}
&\frac{t^{m}}{m!}\left(e^{y t}\sum_{n=0}^\infty B_{m,n}(x)\frac{t^{n}}{n!}-e^{x t}\sum_{n=0}^\infty\frac{1}{2}E_{m,n}(y)\frac{t^{n}}{n!}\right)\\ 
&=\left(\sum_{n=0}^\infty\frac{1}{2}E_{m,n}(y)\frac{t^{n}}{n!}\right)\left(\sum_{n=0}^{\infty }B_{m,n}(x)\frac{t^{n}}{n!}\right)\,.  
\end{align*} 
After the products, we get  
\begin{align*}
&\frac{t^{m}}{m!}\sum_{n=0}^\infty\left(
\sum_{j=0}^{n}\binom{n}{j}\left(B_{m,j}(x)y^{n-j}-\frac{1}{2}E_{m,j}(y)x^{n-j}\right)\right)\frac{t^{n}}{n!}\\
&=\sum_{n=0}^\infty\left(\sum_{j=0}^n\frac{1}{2}\binom{n}{j}E_{m,j}(y)B_{m,n-j}(x)\right)\frac{t^{n}}{n!}\\ 
&=\sum_{n=m}^\infty\left(\sum_{j=0}^n\frac{1}{2}\binom{n}{j}E_{m,j}(y)B_{m,n-j}(x)\right)\frac{t^{n}}{n!}\,.
\end{align*}
Here, we avoided the zero-terms on the right-hand side. 
Thus, 
\begin{align*}
&\frac{1}{m!}\sum_{n=0}^\infty\left(\sum_{j=0}^{n}\binom{n}{j}\left(B_{m,j}(x)y^{n-j}-\frac{1}{2}E_{m,j}(y)x^{n-j}\right)\right)\frac{t^{n+m}}{n!}\\
&=\sum_{n=0}^\infty\left(\sum_{j=0}^{n+m}\frac{1}{2}\binom{n+m}{j}E_{m,j}(y)B_{m,n+m-j}(x)\right)\frac{t^{n+m}}{(n+m)!}\,.
\end{align*}

Comparing the coefficients on both sides, we get 
\begin{align*}
&\frac{1}{m!n!}\sum_{j=0}^{n}\binom{n}{j}\left(B_{m,j}(x)y^{n-j}-\frac{1}{2}E_{m,j}(y)x^{n-j}\right)\\
&=\frac{1}{(n+m)!}\sum_{j=0}^{n+m}\frac{1}{2}\binom{n+m}{j}E_{m,j}(y)B_{m,n+m-j}(x)\,, 
\end{align*} 
from where the desired conclusion follows.  
\end{proof}   

\begin{Cor}  
For a non-negative integer $n$, we have  
\begin{align} 
\sum_{j=0}^{n}\binom{n}{j}E_{j}(y)\left((x-1)^{n-j}+x^{n-j}\right) 
&=2(x+y-1)^{n}
\label{eq:1}\,,\\
\sum_{j=0}^{n}\binom{n}{j}E_{j}(y)\left(B_{n-j}(x-1)+B_{n-j}(x)\right)
&=2B_{n}(x+y-1) 
\label{eq:2}\,,\\
\sum_{j=0}^{n}\binom{n}{j}E_{j}(y)\left(E_{n-j}(x-1)+E_{n-j}(x)\right)
&=2E_{n}(x+y-1) 
\label{eq:3}\,,\\
\sum_{j=0}^n\binom{n}{j}\bigl(B_j(x)y^{n-j}-j(y-1)^{j-1}x^{n-j}\bigr)
&=\sum_{j=0}^n\binom{n}{j}(y-1)^j B_{n-j}(x)
\label{eq:202}\,,\\
\sum_{j=0}^n\binom{n}{j}\bigl(B_j(x)B_{n-j}(y)-j B_{j-1}(y-1)x^{n-j}\bigr)
&=\sum_{j=0}^n\binom{n}{j}B_j(y-1)B_{n-j}(x)\,,
\label{eq:212}\\
\sum_{j=0}^n\binom{n}{j}\bigl(B_j(x)E_{n-j}(y)-j E_{j-1}(y-1)x^{n-j}\bigr)
&=\sum_{j=0}^n\binom{n}{j}E_j(y-1)B_{n-j}(x)
\label{eq:222}\,. 
\end{align}  
\label{cor:205}
\end{Cor}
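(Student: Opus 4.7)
The strategy is to derive (\ref{eq:1})--(\ref{eq:3}) from the $m=0$ case of Theorem \ref{th:210} and (\ref{eq:202})--(\ref{eq:222}) from the $m=1$ case; within each block the second and third identities are obtained from the first by multiplying the underlying generating-function identity by $t/(e^{t}-1)$ or $2/(e^{t}+1)$, operations that convert every exponential $e^{zt}$ into the generating function of $B_{n}(z)$ or $E_{n}(z)$.

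Setting $m=0$ in Theorem \ref{th:210} and using $B_{0,n}(x)=(x-1)^{n}$ and $E_{0,n}(y)=E_{n}(y)$, a routine rearrangement produces (\ref{eq:1}). Its generating-function form is
\[
\frac{2e^{yt}}{e^{t}+1}\bigl(e^{(x-1)t}+e^{xt}\bigr)=2e^{(x+y-1)t}.
\]
Multiplying by $t/(e^{t}-1)$ converts each $e^{zt}$ to $te^{zt}/(e^{t}-1)=\sum_{n\ge 0}B_{n}(z)t^{n}/n!$, and reading off the coefficient of $t^{n}/n!$ gives (\ref{eq:2}); multiplying instead by $2/(e^{t}+1)$ gives (\ref{eq:3}).

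For (\ref{eq:202}) I set $m=1$ in Theorem \ref{th:210}, using $B_{1,n}(x)=B_{n}(x)$ and $E_{1,j}(y)=2j(y-1)^{j-1}$ for $j\ge 1$ (with $E_{1,0}(y)=0$); after reindexing the right-hand side by $k=j-1$, a common factor $n+1=\binom{n+1}{n}$ appears on both sides and is divided out, leaving (\ref{eq:202}). The underlying generating-function identity is
\[
\frac{te^{xt}}{e^{t}-1}\bigl(e^{yt}-e^{(y-1)t}\bigr)=te^{(x+y-1)t}.
\]
Multiplying by $t/(e^{t}-1)$ and extracting the coefficient of $t^{n}/n!$ produces
\[
\sum_{j=0}^{n}\binom{n}{j}B_{j}(x)\bigl(B_{n-j}(y)-B_{n-j}(y-1)\bigr)=nB_{n-1}(x+y-1).
\]
Rewriting $nB_{n-1}(x+y-1)=\sum_{j}\binom{n}{j}jB_{j-1}(y-1)x^{n-j}$ (immediate from the addition formula $B_{n-1}(u+v)=\sum_{k}\binom{n-1}{k}B_{k}(v)u^{n-1-k}$ with $u=x$, $v=y-1$ and the reindexing $k=j-1$) and regrouping yields (\ref{eq:212}); the identity (\ref{eq:222}) follows by the same procedure with $t/(e^{t}-1)$ replaced by $2/(e^{t}+1)$ and Bernoulli polynomials replaced by Euler polynomials throughout.

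The main obstacle is handling the extra factor of $t$ on the right-hand side of the $m=1$ generating-function identity: multiplying by $t/(e^{t}-1)$ (respectively $2/(e^{t}+1)$) forces the appearance of the shifted term $nB_{n-1}(x+y-1)$ (respectively $nE_{n-1}(x+y-1)$), which must be re-expressed via the Bernoulli (respectively Euler) polynomial addition formula to recover exactly the convolution sum $\sum_{j}\binom{n}{j}jB_{j-1}(y-1)x^{n-j}$ (respectively with $E$ in place of $B$) appearing in (\ref{eq:212}) and (\ref{eq:222}); once that identification is made, the remaining coefficient matching is routine.
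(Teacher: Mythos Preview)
Your argument is correct and follows the same overall plan as the paper: specialize Theorem~\ref{th:210} at $m=0$ to get (\ref{eq:1}) and at $m=1$ to get (\ref{eq:202}), then lift each base identity to its Bernoulli and Euler counterparts. The difference lies only in how the lifting is carried out. The paper packages this step as Lemma~\ref{lem110} (a result quoted from \cite{Pi}): once a polynomial identity of the form $\sum a_{n,k}(x+\alpha)^{k}=\sum b_{n,k}(x+\beta)^{k}$ is established, the same identity holds with $(x+\alpha)^{k}$, $(x+\beta)^{k}$ replaced by $B_{k}(x+\alpha)$, $B_{k}(x+\beta)$ or by $E_{k}(x+\alpha)$, $E_{k}(x+\beta)$. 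Applying this lemma to (\ref{eq:1}) in the variable $x$ yields (\ref{eq:2}) and (\ref{eq:3}) in one line, and applying it to (\ref{eq:202}) in the variable $y$ yields (\ref{eq:212}) and (\ref{eq:222}) equally directly. Your generating-function multiplications by $t/(e^{t}-1)$ and $2/(e^{t}+1)$ are precisely the mechanism behind that lemma, so the two arguments are equivalent in content; the lemma just absorbs the bookkeeping. The trade-off is that your version is self-contained (no external citation) but, in the $m=1$ block, you must undo the extra factor of $t$ by hand via the addition formula to recover the exact convolution shape of (\ref{eq:212}) and (\ref{eq:222}), whereas the paper's lemma handles all six lifts uniformly without that detour.
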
 

We need the following lemma to prove Corollary \ref{cor:205}.

\begin{Lem}  
If the polynomial identity 
$$ 
\sum_{k=0}^{n}a_{n,k}\left(x+\alpha\right)^{k}=\sum_{k=0}^{n}b_{n,k}\left( x+\beta \right) ^{k}
$$ 
holds, then the following identities hold: 
\begin{enumerate} 
\item[(a)]  the Bernoulli polynomial identity 
$$ 
\sum_{k=0}^{n}a_{n,k}B_{k}(x+\alpha)=\sum_{k=0}^{n}b_{n,k}B_{k}(x+\beta)\,, 
$$ 
\item[(b)] the Euler polynomial identity 
$$
\sum_{k=0}^{n}a_{n,k}E_{k}(x+\alpha)=\sum_{k=0}^{n}b_{n,k}E_{k}(x+\beta)\,.
$$ 
\end{enumerate} 
\label{lem110} 
\end{Lem}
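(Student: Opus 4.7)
The plan is to exploit the Appell (binomial) addition property satisfied by both $B_k$ and $E_k$: for any constant $\gamma$,
$$B_k(x+\gamma)=\sum_{j=0}^{k}\binom{k}{j}B_j(x)\gamma^{k-j},\qquad E_k(x+\gamma)=\sum_{j=0}^{k}\binom{k}{j}E_j(x)\gamma^{k-j}.$$
The Euler version is the $m=0$ specialization of Theorem \ref{th30}, and the Bernoulli version is a classical consequence of the factorization $\frac{te^{(x+\gamma)t}}{e^t-1}=\frac{te^{xt}}{e^t-1}\cdot e^{\gamma t}$ (an identical argument to Theorem \ref{th30}).

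First I would read the hypothesis as an identity of polynomials in the single variable $x$, with $\alpha,\beta$ fixed. Expanding $(x+\alpha)^k$ and $(x+\beta)^k$ by the binomial theorem and regrouping as powers of $x$, matching the coefficient of $x^j$ on the two sides gives, for each $0\le j\le n$,
$$\sum_{k=j}^{n}a_{n,k}\binom{k}{j}\alpha^{k-j}=\sum_{k=j}^{n}b_{n,k}\binom{k}{j}\beta^{k-j}.$$

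Next I would plug the Appell formula for $B_k$ into the left-hand side of (a) and swap the order of summation to obtain
$$\sum_{k=0}^{n}a_{n,k}B_k(x+\alpha)=\sum_{j=0}^{n}B_j(x)\sum_{k=j}^{n}a_{n,k}\binom{k}{j}\alpha^{k-j},$$
and similarly the right-hand side of (a) rearranges to $\sum_{j=0}^{n}B_j(x)\sum_{k=j}^{n}b_{n,k}\binom{k}{j}\beta^{k-j}$. Invoking the coefficient identity just displayed, the two expressions are equal, proving (a). Part (b) follows by the identical argument with $E_k$ in place of $B_k$, using the Euler Appell formula.

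This is essentially a routine umbral substitution, so no serious obstacle is expected. The only point requiring care is the reading of the hypothesis: since $\alpha$ and $\beta$ are constants (not variables), the identity is a polynomial identity in $x$ alone, which is what legitimizes the coefficient extraction that drives the argument.
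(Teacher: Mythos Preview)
Your argument is correct and self-contained. The paper, by contrast, does not give a direct proof: it cites \cite{Pi} for part (a) and then deduces (b) from (a) via a formula expressing $E_k$ in terms of Bernoulli polynomials. Your route is genuinely different and in some ways more illuminating: by isolating the Appell addition law $P_k(x+\gamma)=\sum_{j}\binom{k}{j}P_j(x)\gamma^{k-j}$ as the only property needed, you show that the lemma holds uniformly for \emph{any} Appell sequence, with (a) and (b) as parallel instances rather than one derived from the other. The paper's approach, on the other hand, highlights the specific link between Euler and Bernoulli polynomials but at the cost of relying on an external reference for the main step. Your remark about reading the hypothesis as a polynomial identity in $x$ alone (with $\alpha,\beta$ fixed) is exactly the right clarification to make the coefficient extraction legitimate.
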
  
\begin{proof}  
Affirmation (a) is Theorem 1 in \cite{Pi}. Affirmation (b) is obtained from (a) together with a formula expressing Euler polynomials in terms of Bernoulli polynomials. 
\end{proof}  

\begin{proof}[Proof of Corollary \ref{cor:205}]    
Since $B_{0,n}(x)=(x-1)^{n}$ and $E_{0,n}(x)=E_n(x)$, by setting $m=0$ in Theorem \ref{th:210}, we have 
$$ 
\sum_{j=0}^{n}\binom{n}{j}\left((x-1)^{j}y^{n-j}-\frac{1}{2}E_{j}(y)x^{n-j}\right) 
=\frac{1}{2}\sum_{j=0}^{n}\binom{n}{j}E_{j}(y)(x-1)^{n-j}\,, 
$$ 
yielding 
\begin{align*}  
\sum_{j=0}^n\binom{n}{j}E_j(y)\left((x-1)^{n-j}+x^{n-j}\right)
&=2\sum_{j=0}^n\binom{n}{j}(x-1)^j y^{n-j}\\
&=2(x+y-1)^j\,, 
\end{align*} 
which is (\ref{eq:1}). 
The identities (\ref{eq:2}) and (\ref{eq:3}) are obtained from (\ref{eq:1}) by applying Lemma \ref{lem110}. 
On the other hand, since $B_{1,n}(x)=B_n(x)$ and $E_{1,n}(x)=2 n(x-1)^{n-1}$ (Theorem \ref{th:m=1}), by setting $m=1$ in Theorem \ref{th:210}, we have 
\begin{multline*} 
(n+1)\sum_{j=0}^n\binom{n}{j}\left(B_j(x)y^{n-j}-j(y-1)^{j-1}x^{n-j}\right)\\ 
=\sum_{j=0}^n\binom{n+1}{n-j}(j+1)(y-1)^j B_{n-j}(x)\,. 
\end{multline*} 
Dividing $n+1$ on both sides, we get the identity (\ref{eq:202}).  
The identities (\ref{eq:212}) and (\ref{eq:222}) are obtained from (\ref{eq:202}) by applying Lemma \ref{lem110}. 
\end{proof}

\begin{theorem} 
For non-negative integers $n$ and $m$, we have 
\begin{multline*} 
2\sum_{j=0}^n\binom{n}{j}E_{m+1,n-j}(x)y^j-\frac{2 n}{m+1}\sum_{j=0}^{n-1}\binom{n-1}{j}E_{m,n-j-1}(y)x^j\\
=\sum_{j=0}^n\binom{n}{j}E_{m+1,n-j}(x)E_{m,j}(y)\,. 
\end{multline*} 
\end{theorem}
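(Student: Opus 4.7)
The approach is purely via generating functions. Write $A_m(t) := e^t+1-\sum_{j=0}^{m-1}t^j/j!$ so that the defining equation (\ref{def:tep}) reads
$$G_m(x;t) := \sum_{n\ge 0}E_{m,n}(x)\frac{t^n}{n!} = \frac{2t^m/m!\cdot e^{xt}}{A_m(t)}.$$
With this notation, the right-hand side of the claimed identity is precisely the coefficient of $t^n/n!$ in the product $G_{m+1}(x;t)G_m(y;t)$, by the standard binomial convolution. So it suffices to show that this product, expanded as a power series, has the same $t^n/n!$ coefficient as the left-hand side.

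The main algebraic observation is the elementary identity $A_m(t)-A_{m+1}(t)=t^m/m!$, which yields the telescoping relation
$$\frac{1}{A_{m+1}(t)A_m(t)} = \frac{m!}{t^m}\left(\frac{1}{A_{m+1}(t)}-\frac{1}{A_m(t)}\right).$$
Substituting into $G_{m+1}(x;t)G_m(y;t)=\frac{4t^{2m+1}}{(m+1)!\,m!}\cdot\frac{e^{(x+y)t}}{A_{m+1}(t)A_m(t)}$ gives
$$G_{m+1}(x;t)G_m(y;t)=\frac{4t^{m+1}/(m+1)!\cdot e^{(x+y)t}}{A_{m+1}(t)}-\frac{4t^{m+1}/(m+1)!\cdot e^{(x+y)t}}{A_m(t)}.$$
This is the entire content of the identity; everything else is bookkeeping.

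It remains only to interpret each of the two fractions on the right. The first equals $2e^{yt}G_{m+1}(x;t)$, so its coefficient of $t^n/n!$ is $2\sum_{j=0}^n\binom{n}{j}E_{m+1,n-j}(x)y^j$ by Theorem \ref{th30}-style binomial convolution. The second equals $\frac{2t}{m+1}\cdot e^{xt}G_m(y;t)$; extracting the coefficient of $t^n/n!$ from $e^{xt}G_m(y;t)$ gives $\sum_{j}\binom{n}{j}E_{m,n-j}(y)x^j$, and multiplication by $t$ shifts the index and introduces a factor of $n$, so the overall coefficient of $t^n/n!$ is $\frac{2n}{m+1}\sum_{j=0}^{n-1}\binom{n-1}{j}E_{m,n-j-1}(y)x^j$. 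Equating $t^n/n!$-coefficients on both sides yields the theorem.

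No step here is genuinely hard: the only real idea is spotting the telescoping decomposition of $1/(A_{m+1}A_m)$, which is what converts the product of two truncated-Euler generating functions into a sum of single ones and thereby links the quadratic left-hand side to the mixed linear right-hand side. The remaining work — binomial convolution and an index shift to handle the factor of $t$ — is mechanical.
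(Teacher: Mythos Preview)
Your proof is correct and is essentially the same generating-function argument as the paper's: both rest on the single relation $A_m(t)-A_{m+1}(t)=t^m/m!$ and arrive at the identity $G_{m+1}(x;t)G_m(y;t)=2e^{yt}G_{m+1}(x;t)-\frac{2t}{m+1}e^{xt}G_m(y;t)$, followed by routine coefficient extraction. The only organizational difference is that you package the key step as a partial-fraction decomposition of $1/(A_{m+1}A_m)$, whereas the paper substitutes $A_{m+1}=A_m-t^m/m!$ into $A_{m+1}G_{m+1}(x;t)=\frac{2t^{m+1}}{(m+1)!}e^{xt}$ and then multiplies through by $G_m(y;t)$; the algebra is identical.
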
 
\label{th:301}

\begin{proof}[Proof of Theorem \ref{th:301}] 
From the definition (\ref{def:tep}),  
\begin{align*}  
&\frac{2 t^{m+1}}{(m+1)!}e^{x t}\\  
&=\left(e^t+1-\sum_{j=0}^{m-1}\frac{t^j}{j!}-\frac{t^m}{m!}\right)\sum_{n=0}^\infty E_{m+1,n}(x)\frac{t^n}{n!}\\
&=\left(e^t+1-\sum_{j=0}^{m-1}\frac{t^j}{j!}\right)\sum_{n=0}^\infty E_{m+1,n}(x)\frac{t^n}{n!}-\frac{t^m}{m!}\sum_{n=0}^\infty E_{m+1,n}(x)\frac{t^n}{n!}\,.
\end{align*}  
By using the definition (\ref{def:tep}) again,  we have 
\begin{multline*} 
\frac{2 t^{m+1}}{(m+1)!}e^{x t}\sum_{n=0}^\infty E_{m,n}(y)\frac{t^n}{n!}\\
=\frac{2 t^m}{m!}e^{y t}\sum_{n=0}^\infty E_{m+1,n}(x)\frac{t^n}{n!}
-\frac{t^m}{m!}\sum_{n=0}^\infty E_{m+1,n}(x)\frac{t^n}{n!}\sum_{n=0}^\infty E_{m,n}(y)\frac{t^n}{n!}
\end{multline*} 
or 
\begin{align*}  
&\frac{2 n}{m+1}\sum_{n=1}^\infty\left(\sum_{j=0}^{n-1}\binom{n-1}{j}E_{m,n-j-1}(y)x^j\right)\frac{t^n}{n!}\\
&=2\sum_{n=1}^\infty\left(\sum_{j=0}^n\binom{n}{j}E_{m+1,n-j}(x)y^j\right)\frac{t^n}{n!}\\
&\qquad -\sum_{n=1}^\infty\left(\sum_{j=0}^n\binom{n}{j}E_{m+1,n-j}(x)E_{m,j}(y)\right)\frac{t^n}{n!}\,. 
\end{align*}  
Comparing the coefficients on both sides, we obtain the desired result. 
\end{proof} 

\begin{Cor}  
For a non-negative integer $n$, we have  
\begin{align} 
&\sum_{j=0}^{n}\binom{n}{j}(2(x-1)^{n-j}y^j-E_{n-j}(y)x^j)=\sum_{j=0}^{n}\binom{n}{j}(x-1)^{n-j}E_j(y)
\label{eq:302}\,,\\
&\sum_{j=0}^{n}\binom{n}{j}(2B_{n-j}(x-1)y^j-E_{n-j}(y)B_j(x))=\sum_{j=0}^{n}\binom{n}{j}B_{n-j}(x-1)E_j(y)
\label{eq:312}\,,\\
&\sum_{j=0}^{n}\binom{n}{j}(2E_{n-j}(x-1)y^j-E_{n-j}(y)E_j(x))=\sum_{j=0}^{n}\binom{n}{j}E_{n-j}(x-1)E_j(y)
\label{eq:322}\,. 
\end{align}  
\label{cor:305}
\end{Cor}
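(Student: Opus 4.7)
The plan is to mirror the structure of the proof of Corollary \ref{cor:205}: derive identity (\ref{eq:302}) by specializing Theorem \ref{th:301} at $m=0$, and then obtain (\ref{eq:312}) and (\ref{eq:322}) from (\ref{eq:302}) by invoking Lemma \ref{lem110}(a) and (b), respectively.

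First I would set $m=0$ in Theorem \ref{th:301}. By Theorem \ref{th:m=1}, $E_{1,n-j}(x)=2(n-j)(x-1)^{n-j-1}$ for $n-j\ge 1$ and the $j=n$ term drops out; meanwhile $E_{0,j}(y)=E_j(y)$. Using the binomial identity $\binom{n}{j}(n-j)=n\binom{n-1}{j}$, every sum acquires a common factor $2n$, so Theorem \ref{th:301} reduces (after dividing by $2n$) to
\begin{align*}
&2\sum_{j=0}^{n-1}\binom{n-1}{j}(x-1)^{n-j-1}y^j-\sum_{j=0}^{n-1}\binom{n-1}{j}E_{n-j-1}(y)x^j\\
&=\sum_{j=0}^{n-1}\binom{n-1}{j}(x-1)^{n-j-1}E_j(y)\,.
\end{align*}
Replacing $n-1$ by $n$ yields exactly (\ref{eq:302}); the case $n=0$ can be checked directly.

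Next I would rewrite (\ref{eq:302}), read as a polynomial identity in $x$, in the form required by Lemma \ref{lem110}: gather the $(x-1)^{n-j}$ terms on one side and the $x^j$ terms on the other, so that the identity becomes
$$
\sum_{j=0}^{n}\binom{n}{j}\bigl(2y^j-E_j(y)\bigr)(x-1)^{n-j}=\sum_{j=0}^{n}\binom{n}{j}E_{n-j}(y)x^j\,,
$$
which has the shape of Lemma \ref{lem110} with $\alpha=-1$ and $\beta=0$. Applying part (a) of the lemma replaces $(x-1)^{n-j}$ by $B_{n-j}(x-1)$ and $x^j$ by $B_j(x)$, yielding (\ref{eq:312}) after rearrangement; applying part (b) replaces them by $E_{n-j}(x-1)$ and $E_j(x)$ and gives (\ref{eq:322}).

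The only nontrivial step is the first one, where a modest amount of index bookkeeping is needed to convert Theorem \ref{th:301} at $m=0$ into (\ref{eq:302}); the remainder is a direct application of Lemma \ref{lem110}.
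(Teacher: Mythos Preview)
Your proposal is correct and follows exactly the route taken in the paper: specialize Theorem \ref{th:301} at $m=0$ using $E_{1,k}(x)=2k(x-1)^{k-1}$ to obtain (\ref{eq:302}), then apply Lemma \ref{lem110}(a) and (b) to deduce (\ref{eq:312}) and (\ref{eq:322}). In fact you spell out the index bookkeeping (the use of $\binom{n}{j}(n-j)=n\binom{n-1}{j}$, the shift $n-1\mapsto n$, and the regrouping of (\ref{eq:302}) into the $\alpha=-1$, $\beta=0$ shape required by the lemma) more carefully than the paper's terse proof does.
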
  
\begin{proof}   
Since $E_{1,n}(x)=2 n(x-1)^{n-1}$ (Theorem \ref{th:m=1}), by setting $m=0$ in Theorem \ref{th:301}, we get (\ref{eq:302}).  
The identities (\ref{eq:312}) and (\ref{eq:322}) are obtained from (\ref{eq:302}) by applying Lemma \ref{lem110}.  
\end{proof}



\end{document}